\newcommand{\R}{\mathbf{R}}
\newtheorem{theorem}{Theorem}[section]
\newtheorem{lemma}[theorem]{Lemma}
\theoremstyle{definition}
\theoremstyle{definition}
\newtheorem{remark}[theorem]{Remark}
\theoremstyle{definition}
\numberwithin{equation}{section}
\numberwithin{table}{section}
\numberwithin{figure}{section}
\DeclareMathAlphabet{\pazocal}{OMS}{zplm}{m}{n}
\numberwithin{equation}{section}
\title{On a converse of Sturm's comparison theorem }
\author[1]{\textbf{Angelo B. Mingarelli}\footnote{Corresponding author. Email: angelo@math.carleton.ca}}
\affil[1]{School of Mathematics and Statistics, Carleton University, Ottawa, Canada}
\newcommand{\doi}[1]{\url{https://doi.org/#1}}
\newcommand{\MR}[1]{\href{https://www.ams.org/mathscinet-getitem?mr=#1}{MR#1}}
\newcommand{\ZB}[1]{\href{https://zbmath.org/?q=an:#1}{Zbl~#1}}
\renewcommand{\maketitle}{\bgroup\setlength{\parindent}{0pt}

\vspace{1truecm}
\begin{center}{\vbox{\titlefont\@title}}\end{center}
\vspace{0.5truecm}
\begin{center}{\@author} \end{center}

\egroup
}
\renewcommand{\@fnsymbol}[1]{%
    \ifcase#1 \or {\,\Letter\!} \or\textasteriskcentered\or \textasteriskcentered\textasteriskcentered 
    \else\@ctrerr\fi}
\newcommand*{\titlefont}{\fontsize{18}{21.6}\selectfont\textbf}
\def\R{\mathbb{R}}
\begin{document}

\maketitle

\pagestyle{plain}

\begin{center}
\noindent
\begin{minipage}{0.85\textwidth}\parindent=15.5pt

%\medskip
%\begin{center}
%\noindent Received XX XXXXXX 20XX, appeared XX XXXXXX 20XX % Please do not change this line, it will be edited by the technical editors.
%\smallskip

%\noindent Communicated by Handling Editor
%\end{center}
%\bigskip

{\small{
\noindent {\bf Abstract.}
We show that Sturm's classical comparison theorem (SCT)  on the interlacing of zeros of solutions of pairs of real second order two-term ordinary differential equations necessarily fails if the usual Sturmian-type conditions on the coefficients are violated.  We also show that the conditions on the coefficients are, in some sense, best possible. We give a necessary and sufficient condition for the failure of SCT and we note that its converse is false, and that the comparison theorem may still hold with very mild hypotheses on the coefficients.
\smallskip

\noindent {\bf{Keywords:}} Sturm comparison theorem, 
\smallskip

\noindent{\bf{2020 Mathematics Subject Classification:}}  34B24, 34C10, 47B50
}}

\end{minipage}
\end{center}
\section{Introduction}

In the sequel we will always assume that the coefficient functions $q_1, q_2 $ are piecewise continuous in $I=[a, b]$. Hereafter, by {\it solution} we  shall always mean a nontrivial (or non-identically zero) solution of the equation under consideration.
In its simplest form, Sturm's Comparison Theorem states that if for all $t\in I= [a, b]$.
\begin{equation}\label{eq0}
q_1(t)\leq q_2(t),
 \end{equation}
and $u(t)$ is a solution of the ordinary differential equation
\begin{equation}\label{eq1}
u^{\prime\prime} + q_1(t)\, u = 0
\end{equation}
satisfying 
\begin{equation}\label{eq2}
u(a)=u(b)=0,
\end{equation}
with consecutive zeros at $a, b$, then every solution of the differential equation
\begin{equation}\label{eq3}
v^{\prime\prime} + q_2(t)\, v = 0,
\end{equation}
must have at least one zero in $[a, b]$ (actually in $(a, b)$), [\cite{cs}, p. 2].

First, in Theorem~\ref{th1}, we show that SCT is best possible in the sense that there exists a potential functions $q_1, q_2$ such that $q_1(t)\leq q_2(t)$ for all $t\in [a,b]$ {\it except} on a subinterval, $I_\varepsilon$, of arbitrarily small length, $\varepsilon$,  and for which SCT fails on the larger interval $I$. Exploiting this idea will then lead us to the general result later. We motivate said theorem by showing that solutions of \eqref{eq4} fixed by the initial condition $v(0)=1,v^\prime(0) = \lambda$ have their zeros moving to the right as $\lambda$ increases.

Next, we state a converse of SCT and show that the failure of STC is equivalent to the disconjugacy of \eqref{eq3}. Finally, we show that SCT may actually hold for certain classes of functions with very mild restrictions. In general though, the converse of SCT is false. Given the inaugural nature of this study the assumption on the coefficients functions is for simplicity of exposition only. These results should adapt in a natural way to the case where $q_1,q_2$ are locally Lebesgue integrable on $I$ but we will not undertake this modification here.

\section{On the failure of Sturm's comparison theorem}

Let $v(t, \lambda)$ be a solution of \eqref{eq3} satisfying the initial conditions
\begin{equation}\label{ic}
v(0)=1,\quad v^\prime(0) = \lambda,
\end{equation}
where $\lambda\in \R$ and $[a, b]=[0,\pi]$, for simplicity, as a simple change of independent variable will transform the former into the latter. We know that this solution $v$ is continuously differentiable in both $t$ and the parameter $\lambda$ [\cite{ph}, Chapter 5.3]. We now examine the motion of its zeros as $\lambda$ increases. Let $t_0(\lambda)$ denote a typical zero of $v$ for a given $\lambda$.
Since $v(t_0(\lambda), \lambda)=0$, the implicit function theorem implies that
\begin{equation}\label{eq13}
\frac{dt_0(\lambda)}{d\lambda} = - \frac{v^\prime(t_0,\lambda)v_\lambda(t_0,\lambda)}{v^\prime(t_0,\lambda)^2}
\end{equation}
and therefore the motion of the zero $t_0(\lambda)$ depends on the sign of the numerator of the quantity on the right. Observe that both $v, v_\lambda$ satisfy \eqref{eq3} as a function of $t$. Indeed, using a Sturmian argument it is easy to conclude that $v^{\prime\prime}v_{\lambda} - v_\lambda^{\prime\prime}v=0$ which, after an integration over $[0, t_0]$, gives,
$$v^\prime(t_0,\lambda)v_\lambda(t_0,\lambda) = v^\prime(0,\lambda)v_\lambda(0,\lambda) - v_\lambda^\prime(0,\lambda)v(0,\lambda)$$
Using the integral form of the solution $v$ we note that $v_\lambda(0,\lambda)=0$ and $v_\lambda^\prime(0,\lambda)=1$. Since $v(0,\lambda)=1$, we get that 
$$v^\prime(t_0,\lambda)v_\lambda(t_0,\lambda) =-1$$ so that \eqref{eq13} is necessarily positive, which is equivalent to saying that the zero increases (or moves to the right) as $\lambda$ increases.
Indeed, for a given small $\varepsilon>0$, and small positive values of $\lambda$ there is a zero, $t_0(\lambda)$, of $v(t)$ in Theorem~\ref{th1} below. As $\lambda$ increases this zero must eventually leave $I$ through $\pi$, thus showing that $v(t, \lambda)>0$ for all $t\in I$ and all sufficiently large $\lambda$. This sort of reasoning led us to the following result.

\begin{theorem}\label{th1}
There is a constant $\varepsilon_0>0$ such that for every $0 < \varepsilon < \varepsilon_0$, there are potential functions $q_1, q_2$ satisfying \eqref{eq0} except only on an interval $I_{\varepsilon}\subset (a, b)$ of length $\varepsilon$, the result of which is that SCT fails on $I$.
\end{theorem}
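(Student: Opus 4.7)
The plan is to furnish an explicit pair of potentials. After reducing to $[a, b] = [0, \pi]$ by an affine change of independent variable, take $q_1(t) \equiv 1$ on $[0, \pi]$, so that $u(t) = \sin t$ is the solution of \eqref{eq1} with consecutive zeros at $0$ and $\pi$. Set $\varepsilon_0 = \pi$ and, for $0 < \varepsilon < \varepsilon_0$, place a subinterval $I_\varepsilon = (t_1, t_2)$ of length $\varepsilon$ inside $(0, \pi)$, for instance $t_1 = \pi/2 - \varepsilon/2$ and $t_2 = \pi/2 + \varepsilon/2$. Define $q_2(t) = 1$ on $[0, \pi] \setminus I_\varepsilon$ and $q_2(t) = -K^2$ on $I_\varepsilon$, where $K > 0$ will be chosen large (depending on $\varepsilon$). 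Then $q_1 \le q_2$ everywhere on $I$ except on $I_\varepsilon$, where $q_1 > q_2$, as required by the statement.

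To exhibit the failure of SCT it suffices to produce a solution $v$ of \eqref{eq3} that is positive on $[0, \pi]$. I would pick the solution with $v(0) = 1$ and $v'(0) = 0$ and trace it through the three pieces. On $[0, t_1]$ one simply gets $v(t) = \cos t > 0$, with $v(t_1) = \sin(\varepsilon/2) > 0$ and $v'(t_1) = -\cos(\varepsilon/2) < 0$. On $I_\varepsilon$ the equation is $v'' - K^2 v = 0$, solved by
\[
v(t) = \sin(\varepsilon/2)\,\cosh\!\bigl(K(t - t_1)\bigr) - \frac{\cos(\varepsilon/2)}{K}\,\sinh\!\bigl(K(t - t_1)\bigr).
\]
A vanishing condition amounts to $\tanh(K(t - t_1)) = K \tan(\varepsilon/2)$; since $\tanh$ is bounded by $1$, this cannot hold as soon as $K \tan(\varepsilon/2) \ge 1$, and then $v$ stays positive throughout $I_\varepsilon$. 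One then reads off $V_2 := v(t_2) > 0$, $V_2' := v'(t_2) > 0$ and $V_2'/V_2 \to K \to \infty$ as $K \to \infty$.

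On the last piece $[t_2, \pi]$ the equation is $v'' + v = 0$ again, so
\[
v(t) = R\cos(t - t_2 - \phi), \qquad R = \sqrt{V_2^2 + (V_2')^2}, \qquad \phi = \arctan(V_2'/V_2) \in (0, \pi/2).
\]
The first zero to the right of $t_2$ sits at $t = t_2 + \pi/2 + \phi > \pi/2 + \varepsilon/2 + \pi/2 = \pi + \varepsilon/2 > \pi$, so $v > 0$ on $[t_2, \pi]$. Pasting the three pieces gives a solution of \eqref{eq3} which is strictly positive on $[0, \pi]$, so $v$ has no zero in $I$, i.e., SCT fails.

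The genuine obstacle lies in the middle piece: because $v'(t_1) < 0$ and the hyperbolic sine contributes with the wrong sign, one must verify that the exponentially growing $\cosh$ term dominates for $K$ sufficiently large, pushing both $v(t_2)$ and $v'(t_2)$ to large positive values. Once this estimate is in hand, the phase-shift argument on $[t_2, \pi]$ is automatic: the crude bound $\phi > 0$ already places the next zero past $\pi$, and the choice $\varepsilon_0 = \pi$ works for the whole range.
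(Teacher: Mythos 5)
Your construction is correct, but it takes a genuinely different route from the paper's. The paper places the exceptional interval at the right endpoint, $[\pi-\varepsilon,\pi]$, lowers $q_2$ there only slightly (to $(1-\varepsilon)^2$, a drop of $2\varepsilon-\varepsilon^2$), and produces the zero-free solution by shooting: $v(0)=1$, $v'(0)=\lambda$ with $\lambda$ large; the positivity of that solution on the exceptional piece reduces to an estimate of the form $\sin\varepsilon>\varepsilon^2$, which is what forces the paper's $\varepsilon_0\approx 0.87$. You instead dig a deep negative well $q_2=-K^2$ in the middle of $(0,\pi)$ and follow the single solution $\cos t$ through it; positivity on the well is the elementary bound $\tanh(K s)<1\le K\tan(\varepsilon/2)$, and positivity on the final piece is a phase-delay count. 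This buys you the full range $\varepsilon_0=\pi$ and completely explicit, elementary estimates, and it dispenses with the paper's lengthy computation of $c_1,c_2$. The trade-off is quantitative: your violation of \eqref{eq0} has magnitude $1+K^2$ with $K\ge\cot(\varepsilon/2)$, so it blows up (even in the $L^1$ sense) as $\varepsilon\to 0$, whereas the paper's $q_1-q_2=2\varepsilon-\varepsilon^2$ is small pointwise as well as supported on a small set --- a sharper illustration of the fragility of SCT, though the theorem as stated constrains only the length of $I_\varepsilon$, so your argument fully proves it. Two small points to record in a final write-up: the condition $V_2'>0$ needs $K\sin(\varepsilon/2)\tanh(K\varepsilon)>\cos(\varepsilon/2)$, i.e.\ $K$ somewhat larger than $\cot(\varepsilon/2)$, which should be stated alongside $K\tan(\varepsilon/2)\ge 1$; and in fact the cruder bound $\phi\ge 0$ already suffices on $[t_2,\pi]$, since the first zero of $R\cos(t-t_2-\phi)$ then lies at or beyond $t_2+\pi/2=\pi+\varepsilon/2>\pi$.
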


\begin{proof} Without loss of generality we can assume that $[a,b]= [0,\pi]$ as can be readily verified by a simple linear change of independent variable. Let $0<\varepsilon <1$, and let $q_1(t)=1$ on $[0,\pi]$. Define 
\[q_2(t) = \left \{ \begin{array}{ll}
		1 ,	&	\mbox{if  \ \ $0 \leq  t < \pi-\varepsilon$}, \\
		(1-\varepsilon)^2 ,	&	\mbox{if  \ \ $\pi-\varepsilon \leq t \leq \pi$}. \\
		\end{array}
	\right. 
\]
Clearly $q_1(t) = q_2(t)$ except on an interval of length $\varepsilon$. We show that there exists initial conditions associated with a solution $v$ of \eqref{eq3} such that for a given solution $u$ of \eqref{eq1} satisfying \eqref{eq2}, $v(t) \neq 0$ on $[0,\pi]$. This shows that SCT fails in this case.
To this end let $u$ be a solution of \eqref{eq1} with consecutive zeros at $[0,\pi]$ and $u(t) > 0$ in $(0, \pi)$. 

Define a solution $v(t)$ of \eqref{eq3} by requiring that 
\begin{equation}\label{eq04} 
v(0)=1,\quad v^\prime(0)=\lambda.
\end{equation}
We show that there exists $\Lambda$ such that for all $\lambda > \Lambda$ the solution $v(t) \neq 0$ on $[0, \pi]$. (This is sometimes referred to as the {\it shooting method}.)

It is easy to see that our solution $v(t)$ is given by
\[v(t) = \left \{ \begin{array}{ll}
		\lambda\sin t + \cos t ,	&	\mbox{if  \ \ $0 \leq  t < \pi-\varepsilon$}, \\
		c_1 \sin ((1-\varepsilon)t) + c_2 \cos ((1-\varepsilon)t) &	\mbox{if  \ \ $\pi-\varepsilon \leq t \leq \pi$}. \\
		\end{array}
	\right. 
\]
for appropriate constants $c_1, c_2$, each one a function of $\lambda$ and  $\varepsilon$, chosen such that $v$ is continuously differentiable at $t=\pi-\varepsilon$. This leads to the system of equations
\begin{align*}
\lambda \sin (\pi - \varepsilon) + \cos (\pi - \varepsilon) &= c_1 \sin ((1-\varepsilon)(\pi - \varepsilon))+ c_2 \cos  (1-\varepsilon)(\pi - \varepsilon)) \\
\lambda \cos (\pi - \varepsilon) - \sin (\pi - \varepsilon) &= c_1 (1-\varepsilon)  \cos ((1-\varepsilon)(\pi - \varepsilon))\nonumber \\
&- c_2 (1-\varepsilon)  \sin  (1-\varepsilon)(\pi - \varepsilon)).
\end{align*}
in the unknowns, $c_1, c_2$, or, upon simplification,
\begin{align*}
\lambda \sin (\varepsilon) - \cos (\varepsilon) &= c_1 \sin ((1-\varepsilon)(\pi - \varepsilon))+ c_2 \cos  (1-\varepsilon)(\pi - \varepsilon)) \\
- \lambda \cos (\varepsilon) - \sin (\varepsilon) &= c_1 (1-\varepsilon)  \cos ((1-\varepsilon)(\pi - \varepsilon))\nonumber \\
 &- c_2 (1-\varepsilon)  \sin  (1-\varepsilon)(\pi - \varepsilon)).
\end{align*}
A lengthy though straightforward calculation gives,
\begin{align*}
c_1 & = \frac{\lambda}{2\varepsilon-2}\left ((\varepsilon-2)\cos(\varepsilon(\pi - \varepsilon)) - \varepsilon\cos(\varepsilon( \pi - \varepsilon+2))\right )\nonumber\\ 
& - \frac{1}{2\varepsilon-2}\left ((\varepsilon-2)\sin(\varepsilon(\pi - \varepsilon)) + \varepsilon\sin(\varepsilon( \pi - \varepsilon+2))\right )\nonumber
\end{align*}
and
\begin{align*}
c_2 & = \frac{\lambda }{2\varepsilon-2}\left ((\varepsilon-2)\sin(\varepsilon(\pi -\varepsilon)) - \varepsilon\sin(\varepsilon( \pi - \varepsilon+2))\right )\\
& + \frac{1}{2\varepsilon-2}\left ((\varepsilon-2)\cos(\varepsilon(\pi - \varepsilon)) + \varepsilon\cos(\varepsilon( \pi - \varepsilon+2))\right ).\nonumber
\end{align*}

As a result, an explicit form for $v$, is 
\begin{gather}
v(t) = \left \{ \begin{array}{ll|}
		\lambda\sin t + \cos t ,	&	\mbox{if  \ \ $0 \leq  t < \pi-\varepsilon$},\\
		 \lambda f(\varepsilon, t) + g(\varepsilon,t)   &	\mbox{if  \ \ $\pi-\varepsilon \leq t \leq \pi$}. \label{008} \\
		\end{array}
	\right. 
\end{gather}
where 
\begin{align}
(2\varepsilon -2) \,f(\varepsilon, t) & = (\varepsilon-2)\cos(\varepsilon^2-\varepsilon \pi)\sin ((1-\varepsilon)t) \nonumber \\
& - \varepsilon\cos (\varepsilon (\pi-\varepsilon +2) )\sin ((1-\varepsilon)t) \nonumber \\
& - \varepsilon\sin (\varepsilon (\pi-\varepsilon +2)) \cos ((1-\varepsilon)t) \nonumber \\
& -(\varepsilon-2)\sin(\varepsilon^2-\varepsilon \pi) \cos ((1-\varepsilon)t), \label{eq7}
\end{align}
and,
\begin{align*}
(2\varepsilon -2)\,g(\varepsilon, t) & = 
(\varepsilon-2)\cos(\varepsilon^2-\varepsilon \pi)\cos ((1-\varepsilon)t)  \nonumber \\
& + \varepsilon\cos (\varepsilon (\pi-\varepsilon +2) )\cos ((1-\varepsilon)t)\nonumber \\
& - \varepsilon\sin (\varepsilon (\pi-\varepsilon +2)) \sin ((1-\varepsilon)t) \nonumber \\
& +(\varepsilon-2)\sin(\varepsilon^2-\varepsilon \pi) \sin ((1-\varepsilon)t). 
\end{align*}
Observe that 
\begin{equation}\label{eq9}
|g(\varepsilon,t)| \leq \frac{3}{1-\varepsilon}, \quad \pi-\varepsilon \leq t \leq \pi.
\end{equation}
Next we show that, for our given $\varepsilon$, $f(\varepsilon,t) >0$ for $t \in [\pi-\varepsilon, \pi]$ and all sufficiently large $\lambda$. 
Using trigonometric identities in \eqref{eq7} we can rewrite $f$ in the form
\begin{align}
(2-2\varepsilon)\,f(\varepsilon,t) & = (2-\varepsilon) \sin (\varepsilon(\pi-\varepsilon) +t(1-\varepsilon)  ) + \varepsilon \sin (\varepsilon(2+\pi-\varepsilon)+t(1-\varepsilon))\nonumber \\
& = (2-\varepsilon) \sin \eta +\varepsilon \sin (\eta + 2\varepsilon) \label{eq10}
\end{align}
where, $\eta \equiv \varepsilon(\pi-\varepsilon) +t(1-\varepsilon) $ satisfies,
\begin{equation}\label{eq11} \pi -\varepsilon \leq\eta \leq \pi,
\end{equation}
since $ \pi-\varepsilon \leq t \leq \pi$ and $0<\varepsilon<1$. Using \eqref{eq10} we find,
\begin{align}
(2-2\varepsilon)\,f(\varepsilon,t) &= 2\sin \eta + 2\varepsilon^2 \left ( \frac{\sin(\eta+2\varepsilon)-\sin \eta}{2\varepsilon} \right )\nonumber \\
& = 2\sin\eta + 2\varepsilon^2 \cos \xi,\nonumber
\end{align}
for some $\xi \in [\eta, \eta + 2\varepsilon] \subset [\pi-\varepsilon, \pi+2\varepsilon].$ Since $\sin \eta \geq \sin \varepsilon$ on account of \eqref{eq11}, we find that, for $t \in [\pi-\varepsilon, \pi]$, 
\begin{equation}\label{eq12}
(2-2\varepsilon)\,f(\varepsilon,t) \geq 2\sin\varepsilon - 2\varepsilon^2 > 0
\end{equation}
provided $\varepsilon < \varepsilon_0$ where $\sin \varepsilon_0 = \varepsilon_0^2$, ($\varepsilon_0 \approx 0.87$).  Thus, for such a given $\varepsilon$, we can choose $\lambda$ so large in \eqref{eq04} that, on account of \eqref{eq12}, \eqref{eq9} and \eqref{008} we can make, $v(t) >0$ on both $[0,\pi-\varepsilon]$ and $[\pi-\varepsilon, \pi]$, and so on $[0,\pi]$. It follows that SCT fails on this interval.
\end{proof}

\section{On a converse of Sturm's comparison theorem}
We state a form of the converse of the usual SCT next.
\begin{remark}\label{remx}
{\bf Converse of SCT:} Given a solution $u$ of \eqref{eq1} satisfying $u(a)=u(b)=0$, ($u(t) \neq 0$ in $(a, b)$), and that every solution $v$ of \eqref{eq3} vanishes at least once in $I$, then $q_1(t) \leq q_2(t)$ on $I$ . 

The previous example shows that a converse of SCT may be addressed and could be true. In other words, for our choice of $q_1, q_2$ we showed that, by appealing to the contrapositive, that $q_1(t)>q_2(t)$ on $I_\varepsilon$ implies the existence of a solution $v$ of \eqref{eq3} without zeros on $I=[0,\pi]$.
\end{remark}

\begin{lemma}\label{lem2}
Let $a, b$ be consecutive zeros of a solution $u$ of \eqref{eq1}. Let $q_2$ satisfy \eqref{eq0} except on an interval $I \subset [a, b]$ in which $q_1(t) > q_2(t)$ (for all $t\in I$). Then any solution $v$ of \eqref{eq3} has at most one zero in $I$.
\end{lemma}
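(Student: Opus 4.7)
The plan is to derive the conclusion from SCT itself, but with the roles of the two equations reversed on a sub-subinterval of $I$. The assumption that $q_1\leq q_2$ on $[a,b]\setminus I$ plays no role here; only the reversed strict inequality $q_1(t)>q_2(t)$ on $I$ will be used, so the argument is purely local to $I$.

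First I would argue by contradiction: suppose some solution $v$ of \eqref{eq3} has at least two distinct zeros in $I$. Because $v$ is a nontrivial solution of a second-order linear ODE with piecewise continuous coefficients, its zeros are isolated, so I can select a pair $\alpha<\beta$ in $I$ that are \emph{consecutive} zeros of $v$, i.e.\ $v(t)\neq0$ on $(\alpha,\beta)$. In particular $[\alpha,\beta]\subset I$, and hence $q_2(t)<q_1(t)$ for every $t\in[\alpha,\beta]$.

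Next I would invoke SCT on $[\alpha,\beta]$ but with the roles of \eqref{eq1} and \eqref{eq3} interchanged: now \eqref{eq3} plays the part of the equation with the smaller potential $q_2$, \eqref{eq1} that of the equation with the larger potential $q_1$, and $v$ is the distinguished solution with consecutive zeros at the endpoints $\alpha,\beta$. SCT then forces every solution of \eqref{eq1}, in particular $u$ itself, to vanish somewhere in the open interval $(\alpha,\beta)$. But $(\alpha,\beta)\subseteq(a,b)$ (regardless of whether $\alpha$ equals $a$ or $\beta$ equals $b$), and by hypothesis $u$ does not vanish anywhere in $(a,b)$, yielding the required contradiction.

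The argument is essentially a one-line inversion of SCT, so I do not anticipate any serious analytical obstacle. The only two points that need care are (i) choosing the \emph{innermost} pair of zeros of $v$ in $I$, so that the consecutive-zero hypothesis of SCT is satisfied on $[\alpha,\beta]$, and (ii) verifying that the open interval $(\alpha,\beta)$ between these zeros lies in $(a,b)$, so that the non-vanishing of $u$ on $(a,b)$ can be cited as the contradiction.
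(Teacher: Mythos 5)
Your argument is correct and is essentially the paper's own proof: the paper likewise supposes two zeros $t_0<t_1$ of $v$ in $I$ and applies SCT with the roles of the equations reversed on $[t_0,t_1]$ to force a zero of $u$ there, contradicting that $a,b$ are consecutive zeros. Your extra care in selecting \emph{consecutive} zeros of $v$ and checking $(\alpha,\beta)\subseteq(a,b)$ merely fills in details the paper leaves implicit.
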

\begin{proof}
This is clear since if there were two zeros, $t_0< t_1$ in $I$, then Sturm's Comparison Theorem applied to the interval $[t_0, t_1]$ would imply that $u(t)$ must have a zero there, contrary to hypothesis.
\end{proof}

An equation of the form \eqref{eq3} is said to be {\it disconjugate} on $I$ \cite{wac}, [ \cite{ph}, p. 384] if every solution has at most one zero in $I$.
\begin{remark} It follows from Lemma~\ref{lem2} that \eqref{eq3} is disconjugate on a subinterval of $[a,b]$. This leads to the next result.
\end{remark}
\begin{lemma}\label{lem3} {\rm ([\cite{wac}, Theorem 1] )}
The equation \eqref{lem3} is disconjugate on I if it has a solution without zeros on I. For a compact or an open interval I this condition is also necessary.
\end{lemma}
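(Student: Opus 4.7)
The plan is to prove the two directions of the equivalence separately, using standard Wronskian techniques for sufficiency and a continuous-dependence/compactness argument for necessity. (I read the cross-reference in the statement as pointing to equation \eqref{eq3}.)

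For the sufficiency, suppose $v_1$ is a solution of \eqref{eq3} with $v_1(t)\neq 0$ for all $t\in I$, and let $v_2$ be any linearly independent solution. Since \eqref{eq3} has no first-order term, the Wronskian $W=v_1 v_2'-v_1' v_2$ is a nonzero constant, and the quotient $w=v_2/v_1$ satisfies $w'=W/v_1^2$, which is of constant sign. Hence $w$ is strictly monotonic on $I$ and vanishes at most once. An arbitrary nontrivial solution has the form $v=\alpha v_1+\beta v_2$; if $\beta=0$ then $v$ is a nonzero multiple of $v_1$ and has no zero, while if $\beta\neq 0$ then $v/v_1=\alpha+\beta w$ is strictly monotonic and so has at most one zero in $I$. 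This proves disconjugacy.

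For the necessity on a compact interval $I=[a,b]$, I would start from the solution $v_0$ with $v_0(a)=0$, $v_0'(a)=1$; disconjugacy forces $v_0>0$ on $(a,b]$, so $v_0(t)\geq m>0$ on $[a+\delta,b]$ for any small $\delta>0$. Introducing the perturbed family $v_\alpha$ with $v_\alpha(a)=\alpha$ and $v_\alpha'(a)=1$, continuous dependence on initial data yields $v_\alpha\to v_0$ uniformly on $[a,b]$ as $\alpha\to 0^+$. A short Taylor estimate on $[a,a+\delta]$ (using $v_\alpha(a)=\alpha>0$ and $v_\alpha'(a)=1$ together with a uniform bound on $v_\alpha''$) combined with the uniform estimate $v_\alpha\geq m/2$ on $[a+\delta,b]$ for sufficiently small $\alpha>0$ produces a solution that is strictly positive throughout $[a,b]$. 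For an open interval $I=(a,b)$, I would fix a base point $c\in I$, apply the compact case to shrinking subintervals $[a_n,c]$ with $a_n\downarrow a$ to obtain positive solutions $\tilde v_n$ on $[a_n,c]$, normalize $v_n=\tilde v_n/\tilde v_n(c)$ so that $v_n(c)=1$, and pass to a subsequence with $v_n'(c)\to\mu$. Continuous dependence then gives a limiting solution $v$ with $v(c)=1$, $v'(c)=\mu$ that is the pointwise limit of $v_n$ on compact subsets of $I$, hence $v\geq 0$ on $I$. Since any zero of a nontrivial solution of a second-order linear ODE is simple, $v\geq 0$ together with $v(t^\ast)=0$ would force $v'(t^\ast)=0$ and thus $v\equiv 0$ by uniqueness for the initial value problem, contradicting $v(c)=1$.

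The main obstacle is the open-interval part of the necessity: one must ensure both that the limiting solution $v$ is well defined (i.e.\ that the sequence $v_n'(c)$ does not escape to infinity) and that the resulting $v\geq 0$ is strictly positive throughout $I$. The first point can be secured by Sturmian monotonicity of the ``principal'' solutions obtained as $a_n\downarrow a$, which forces $v_n'(c)$ to be monotone and bounded under the chosen normalization; the second is then handled by the simple-zero/uniqueness argument outlined above.
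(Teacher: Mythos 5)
The paper does not actually prove this lemma; it is quoted from Coppel's \emph{Disconjugacy} (Theorem~1 there), so there is no internal argument to measure yours against. (You read the garbled cross-reference correctly: ``\eqref{lem3}'' is a typo for \eqref{eq3}.) Your sufficiency argument (constant Wronskian, strict monotonicity of $v_2/v_1$, hence at most one zero of $\alpha+\beta w$) and your necessity argument on a compact interval (perturb the solution with $v_0(a)=0$, $v_0'(a)=1$ to $v_\alpha(a)=\alpha>0$ and control $[a,a+\delta]$ by a Taylor estimate, $[a+\delta,b]$ by uniform convergence) are both correct and are essentially the standard proofs.

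The open-interval case, however, has a genuine gap. You apply the compact case to $[a_n,c]$ with $a_n\downarrow a$ and a \emph{fixed} right endpoint $c$; these intervals exhaust only $(a,c]$, so the limit $v$ is known to be nonnegative only on $(a,c]$, and nothing in your argument prevents it from vanishing in $(c,b)$. Concretely, take $v''+v=0$ on $I=(0,\pi)$, which is disconjugate and whose only zero-free solutions are the multiples of $\sin t$. With $a_n=1/n$, $c=\pi/2$, your compact-interval step may legitimately return $\tilde v_n(t)=\cos(t-1/n)+\sin(t-1/n)$ (it is positive on $[1/n,\pi/2]$, with $\tilde v_n(a_n)=1>0$ and $\tilde v_n'(a_n)=1$), and the normalized limit is a multiple of $\sin t+\cos t$, which vanishes at $t=3\pi/4\in I$. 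The repair is to exhaust $I$ by intervals $[a_n,b_n]$ with $a_n\downarrow a$ \emph{and} $b_n\uparrow b$, or better, to use the solutions $u_n$ vanishing at $a_n$, which disconjugacy on $(a,b)$ forces to be of one sign on all of $(a_n,b)$ --- this is Coppel's route, and it is also the setting in which your appeal to ``Sturmian monotonicity of the principal solutions'' actually applies; as written, your $\tilde v_n$ are not principal solutions (they are small but nonzero at $a_n$), so that boundedness claim for $v_n'(c)$ is unsupported. The cleanest finish is to normalize $v_n(c)^2+v_n'(c)^2=1$, extract a subsequential limit, and then use your simple-zero/uniqueness observation to upgrade $v\geq 0$ to $v>0$.
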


\begin{theorem}\label{thm2}
Sturm's Comparison Theorem for equations \eqref{eq1}, \eqref{eq3} fails on $I$ if and only if \eqref{eq3} is disconjugate on $[a,b]$
\end{theorem}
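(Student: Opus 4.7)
The plan is to reduce the theorem to Lemma~\ref{lem3} once the meaning of ``SCT fails on $I$'' has been made explicit. With $u$ a fixed solution of \eqref{eq1} having consecutive zeros at $a$ and $b$, the conclusion of SCT is the assertion that \emph{every} solution of \eqref{eq3} possesses at least one zero in $[a,b]$. So the negation of this conclusion --- the statement ``SCT fails on $I$'' --- is simply that there exists at least one solution $v$ of \eqref{eq3} with no zero in $[a,b]$. I would record this reformulation at the very start, as both implications of the theorem reduce to it.

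The forward direction is then immediate: if SCT fails, pick a solution $v$ of \eqref{eq3} with no zero on $[a,b]$; the sufficiency half of Lemma~\ref{lem3} (``disconjugate if there is a zero-free solution on $I$'') gives that \eqref{eq3} is disconjugate on $[a,b]$. Note that this half does not require compactness of the interval, so the direction works with essentially no hypothesis on $[a,b]$ beyond the obvious one.

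For the reverse direction I would invoke the necessity half of Lemma~\ref{lem3}. Since $[a,b]$ is a compact interval, disconjugacy of \eqref{eq3} on $[a,b]$ furnishes a solution $v$ of \eqref{eq3} which is zero-free throughout $[a,b]$; this $v$ explicitly witnesses the failure of SCT, closing the equivalence. It is precisely here that compactness of $[a,b]$ is used, via Willett's theorem.

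There is no real analytic obstacle --- the proof is essentially a direct, two-line application of Willett's theorem in each direction. The one point I would take care to state cleanly is the interpretation of ``no zero'': adopting the closed-interval reading on $[a,b]$ (consistent with the original statement of SCT given in \eqref{eq0}--\eqref{eq3}) matches the hypothesis of Lemma~\ref{lem3} exactly and makes the equivalence crisp; any open/closed endpoint discrepancies would only clutter the argument without adding content.
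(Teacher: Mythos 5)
Your proof is correct and follows essentially the same route as the paper: both directions are read off directly from Lemma~\ref{lem3} after unwinding ``SCT fails'' as the existence of a zero-free solution of \eqref{eq3} on $[a,b]$. If anything, you place the compactness hypothesis more accurately than the paper's own wording does --- it is needed only for the necessity half of Lemma~\ref{lem3}, i.e.\ in the direction from disconjugacy to the existence of a zero-free solution.
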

\begin{proof}
Assume that SCT fails on $[a,b]$. Thus, given a solution $u$ of \eqref{eq1} with consecutive zeros at $a, b$, there is a solution $v$ of \eqref{eq3} with no zeros on $I$.    It now follows from Lemma~\ref{lem3} above that, since $I $ is compact, \eqref{eq3} is disconjugate on $I$.   Conversely, assume that \eqref{eq3} is disconjugate on $[a,b]$.  then by the Lemma~\ref{lem3}, there is a solution $v$ such that $v(t)\neq 0$ in $[a,b]$. Hence SCT fails on $[a,b]$.
\end{proof}
\begin{remark} 
Of course, the assumption that \eqref{eq1} has a solution with two consecutive zeros is implicit in the statement of SCT. The previous result emphasizes that in order for the hypotheses of SCT to be satisfied at all, \eqref{eq1} cannot be disconjugate (or else we can never find a solution $u$ with $u(a)= u(b)=0$). In particular, if we impose the condition \eqref{eq0} on a pair of equations \eqref{eq1}-\eqref{eq3} with the assumption that \eqref{eq3} is disconjugate then, either by Theorem~\ref{thm2} or by variational arguments (see [\cite{wac}, Theorem 7]), it can be shown that \eqref{eq1} must also be disconjugate, and so there is no solution (other than the trivial one, which is always excluded in this paper) that vanishes at two points. It follows that the question of the application of SCT cannot be asked in this case as the space of solutions satisfying \eqref{eq0} consists of one point only, the zero solution.
Since we showed directly that our solution $v(t)$ of \eqref{eq3} is positive in $I$, we get that Theorem~\ref{th1} is also consequence of Theorem~\ref{thm2}.
\end{remark}
The question now is, assuming that there are solutions $u$ of \eqref{eq1} vanishing at two consecutive points, $a, b$, to what extent is \eqref{eq0} necessary? Theorem~\ref{th1} shows that even if \eqref{eq0} is violated on an arbitrarily small interval, SCT may fail. The next result is not unexpected.
\begin{theorem}
Let $q_1(t) > q_2(t)$ for all $t \in I$. Then SCT fails, in the sense that if $u$ is a solution of \eqref{eq1} satisfying $u(a)=u(b)=0$, there is a solution $v$ of \eqref{eq3} with no zeros in $I$.
\end{theorem}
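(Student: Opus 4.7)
The plan is to leverage Theorem~\ref{thm2}, which tells us that SCT fails on $[a,b]$ exactly when \eqref{eq3} is disconjugate on $[a,b]$. So the whole task reduces to showing that, under the strict inequality $q_1(t)>q_2(t)$ on $I$ together with the existence of $u$ solving \eqref{eq1} with consecutive zeros at $a$ and $b$, every nontrivial solution of \eqref{eq3} has at most one zero in $[a,b]$. Once disconjugacy is in hand, Lemma~\ref{lem3} produces the required zero-free solution $v$ on the compact interval $I$, and the conclusion follows.

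To prove disconjugacy of \eqref{eq3}, I would argue by contradiction. Suppose some nontrivial solution $v$ of \eqref{eq3} has two zeros in $[a,b]$; pick consecutive ones $c<d$. Now apply the classical SCT with the roles of the coefficients reversed: since $q_2(t)<q_1(t)$ on $[c,d]\subseteq I$ and $v$ (a solution of the ``smaller'' equation \eqref{eq3}) has consecutive zeros at $c$ and $d$, every solution of the ``larger'' equation \eqref{eq1} must have a zero in the open interval $(c,d)$. In particular, $u$ has a zero in $(c,d)$.

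The contradiction comes from the observation that $a\leq c<d\leq b$ forces $(c,d)\subseteq (a,b)$, so $u$ would possess a zero strictly between $a$ and $b$, contradicting the assumption that $a$ and $b$ are consecutive zeros of $u$. Hence no such $c<d$ exists, \eqref{eq3} is disconjugate on $[a,b]$, Theorem~\ref{thm2} (or Lemma~\ref{lem3} directly) delivers a solution $v$ with $v(t)\neq 0$ on $I$, and SCT fails as claimed.

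I do not expect a real obstacle here: the argument is essentially a one-line application of SCT in reversed roles together with Theorem~\ref{thm2}. The only point that deserves a second of care is verifying that the classical SCT conclusion places the zero of $u$ in the \emph{open} interval $(c,d)$, which the paper's statement of SCT explicitly allows (``actually in $(a,b)$''); this is precisely what is needed to reach the contradiction, since $u$ is allowed to vanish at the endpoints $a,b$ but not strictly inside.
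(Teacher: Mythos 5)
Your proof is correct and follows essentially the same route as the paper: the paper invokes Lemma~\ref{lem2} (whose proof is precisely your reversed-roles application of SCT on the interval between two hypothetical zeros of $v$) to get disconjugacy of \eqref{eq3}, and then concludes via Theorem~\ref{thm2} and Lemma~\ref{lem3}. You have merely inlined the proof of Lemma~\ref{lem2}, with the added (and welcome) care about the zero landing in the open interval.
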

\begin{proof}
We know from lemma~\ref{lem2} that the hypothesis implies that every solution $v$ has at most one zero, so that \eqref{eq3} is disconjugate. Hence SCT fails by Theorem~\ref{thm2}.
\end{proof}
On the other hand, the next result appears to be new and is complementary to both Theorem~\ref{th1} and SCT.
\begin{theorem}\label{thm3}
There are functions $q_1, q_2$ such that \eqref{eq0} holds on some arbitrarily small subinterval $J \subset I$ and such that SCT holds.
\end{theorem}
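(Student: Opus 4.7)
The plan is to exhibit an explicit pair $q_1, q_2$ for which the inequality $q_1 \leq q_2$ fails on most of $I$ but is imposed on a small subinterval $J$, with $q_2$ chosen so large on $J$ that every solution of \eqref{eq3} is forced to vanish inside $J$. This gives a concrete example in which the conclusion of SCT holds even though its standard hypothesis \eqref{eq0} is violated almost everywhere on $I$, complementing Theorem~\ref{th1}.

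Concretely, take $[a,b]=[0,\pi]$ and $q_1\equiv 1$, so that $u(t)=\sin t$ is a solution of \eqref{eq1} with consecutive zeros at $0$ and $\pi$. Given an arbitrarily small $\delta\in(0,\pi)$, pick a subinterval $J=[\alpha,\alpha+\delta]\subset(0,\pi)$ and a constant $M>\pi/\delta$, and define
\[
q_2(t)=\begin{cases} M^2, & t\in J,\\ 0, & t\in[0,\pi]\setminus J.\end{cases}
\]
Then $q_1\leq q_2$ holds precisely on $J$, while $q_1>q_2$ on $I\setminus J$, as required.

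The verification is short. Any solution $v$ of \eqref{eq3}, when restricted to $J$, satisfies $v''+M^2 v=0$; hence there exist constants $A,B$, not both zero, with $v(t)=A\sin(M(t-\alpha))+B\cos(M(t-\alpha))$ on $J$. Such a sinusoid has consecutive zeros separated by $\pi/M<\delta$, so at least one of its zeros lies in $J\subset[0,\pi]$. Thus every solution of \eqref{eq3} vanishes somewhere in $[0,\pi]$, which is exactly the conclusion of SCT. As an alternative route, choosing $M>2\pi/\delta$ forces two zeros of each solution to lie in $J$, so \eqref{eq3} fails to be disconjugate on $[0,\pi]$, and Theorem~\ref{thm2} delivers the conclusion at once.

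I do not anticipate any real obstacle; the key observation is that a sufficiently large $q_2$ on any subinterval, however short, forces oscillation of every solution on that subinterval, independent of initial data. The only minor technical point is to ensure that $v$ and $v'$ are matched across $\alpha$ and $\alpha+\delta$, but this is automatic from the standard existence theory for equations with piecewise continuous coefficients under which the paper is working.
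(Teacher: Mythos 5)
Your proof is correct and rests on the same idea as the paper's: make $q_2$ a large constant on the short interval $J$ so that every solution of \eqref{eq3} is forced to oscillate, hence to vanish, inside $J$. The one substantive difference is that the paper takes $q_2(t)=1/\delta^2>1=q_1(t)$ on all of $I$ and then applies Sturm's separation theorem to the particular solution $\cos(t/\delta)$, so in the paper's example \eqref{eq0} in fact holds everywhere; your version, with $q_2=0$ off $J$, genuinely violates \eqref{eq0} outside $J$ and argues directly from the explicit form of the general solution on $J$, which is closer to the intended content of the theorem (and to the paper's second, more general version of it).
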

\begin{proof}
Once again we may assume, without loss of generality, that $I=[0, \pi]$. This said, let $J=[0, \varepsilon]$ where $0 <\varepsilon < \pi$ is given but fixed. Now define $q_1(t)=1$, $q_2(t) = 1/\delta^2$ where $\delta > 0$ is chosen so that $\delta < \min \{ 1, 2\varepsilon/3\pi\}$. Next, define $u$ by $u(t) = \sin t$ so that $u(0)=u(\pi)=0$ and $v$ by $v(t) = \cos (t/\delta)$.
Clearly $u(t) \neq 0$ for all $t\in (0,\pi)$, while $v(t) = 0$ at two points, namely $t=\pi \delta/2, 3\pi\delta/2$. Sturm's separation theorem now implies that any other solution $v$ of \eqref{eq3} must have at least one zero in the interval $(\pi \delta/2,  3\pi\delta/2)$ and so in $(0, \pi)$. Hence, SCT holds on $(0, \pi)$.
\end{proof}

\begin{remark}
Theorem~\ref{thm3}  shows that \eqref{eq0} may be weakened considerably in some cases while Theorem~\ref{th1} indicates that \eqref{eq0} is close to being best possible for the validity of SCT, in general. 
\end{remark}
The previous result is a special case of the following more general result.
\begin{theorem}\label{thm3}
Given $q_1$ and a solution $u$ satisfying \eqref{eq2}, there is a function $q_2$ and an interval $J\in I$ (whose length may be made arbitrarily small) such that \eqref{eq0} is satisfied only on $J$ and SCT holds on $I$. \end{theorem}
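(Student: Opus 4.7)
The plan is to generalize the construction from the preceding special case to arbitrary $q_1$. The guiding idea is that if $q_2$ is made very large on a short interval $J\subset I$, then every solution of \eqref{eq3} oscillates so rapidly on $J$ that it is forced to vanish somewhere in $J$, regardless of the behavior of $q_2$ outside $J$. This freedom lets us set $q_2<q_1$ on $I\setminus J$, so that \eqref{eq0} fails everywhere except on $J$, while SCT still holds because the required zero of $v$ is guaranteed inside $J$.

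Concretely, I would proceed as follows. Given $\varepsilon>0$ as small as desired, pick a closed subinterval $J=[c,c+\varepsilon]\subset(a,b)$. Since $q_1$ is piecewise continuous on the compact interval $I$, the quantity $M=\sup_{t\in I}q_1(t)$ is finite, so I can choose a constant
\[
K>\max\bigl\{M,\ \pi^2/\varepsilon^2\bigr\}
\]
and define $q_2(t)=K$ on $J$ and $q_2(t)=q_1(t)-1$ on $I\setminus J$. With this choice $q_2>q_1$ on $J$ and $q_2<q_1$ on $I\setminus J$, so \eqref{eq0} is satisfied precisely on $J$.

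It then remains to verify that every solution $v$ of \eqref{eq3} has a zero in $I$. On $J$ the equation reduces to $v''+Kv=0$, whose nontrivial solutions are of the form $A\sin(\sqrt{K}(t-t_0))$ and therefore have consecutive zeros separated by $\pi/\sqrt{K}<\varepsilon$; hence at least one zero must lie in $J\subset I$. Equivalently, one may apply the classical SCT on $J$ comparing \eqref{eq3} with $w''+Kw=0$. This forces the conclusion of SCT on $I$ and completes the argument.

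No step presents genuine difficulty; the only point requiring care is to choose $K$ large enough simultaneously to dominate $q_1$ on $J$ (so that \eqref{eq0} holds there) and to fit a half-period of the comparison oscillation inside an interval of length $\varepsilon$ (so that the zero is captured in $J$). Both requirements are handled by a single inequality on $K$.
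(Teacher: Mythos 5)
Your proof is correct and follows essentially the same route as the paper: make $q_2$ a large constant on a short subinterval $J$ so that every solution of \eqref{eq3} oscillates rapidly enough there to be forced to vanish in $J$ (you verify this via the explicit sinusoids and the half-period bound $\pi/\sqrt{K}<\varepsilon$, where the paper invokes Sturm's separation theorem). You are in fact slightly more careful than the paper's own proof, which simply takes $q_2(t)=M$ on $I$ and never arranges for $q_2<q_1$ off $J$; your choice $q_2=q_1-1$ on $I\setminus J$ makes the ``only on $J$'' clause of the statement hold literally.
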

\begin{proof} Choose $M$ so large that $||q_1||_\infty \leq M$ and the equation \eqref{eq3} with $q_2(t) = M$ has a solution with at least two zeros on an arbitrary but fixed subinterval $J$ of $I$. This is always possible as the number of zeros of any solution of $v^{\prime\prime}+Mv=0$ becomes unlimited as $M \to \infty$. Applying Sturm's separation theorem to the remaining solutions of \eqref{eq3} we get that every solution must have at least one zero in $J$ and so in $I$.
\end{proof}
{\bf NOTE:} Of course, the smaller the interval $J$, the larger $M$ must be in the preceding result. Thus, if we are free to choose $q_2$ in SCT then we can relax the fundamental requirement \eqref{eq0} considerably. In addition, Theorem~\ref{thm3} above gives a counterexample to the expected converse of SCT as described in Remark~\ref{remx}.

\end{document}